\newcommand{\set}[1]{\left\{ #1 \right\}}
\newcommand{\inner}[2]{\left\langle #1,\, #2 \right\rangle}
\newcommand{\norm}[1]{\left\| #1 \right\|}
\DeclareMathOperator*{\argmax}{arg\,max}
\newcommand{\abs}[1]{\left| #1 \right|}
\newcommand{\spn}[1]{\text{span}\set{ #1 }}
\renewcommand{\abstract}[1]{
	\centerline{
	\begin{minipage}{0.8\textwidth}
	\begin{centering}
	\textbf{Abstract}\\
	#1 
	\end{centering}
	\end{minipage}
	}
}
\newcommand{\normediter}[1]{\frac{x^{( #1 )}}{\norm{x^{( #1 )}}}}
\newcommand{\normalized}[1]{\frac{#1}{\norm{#1}}}
\newtheorem{theorem}{Theorem}
\newtheorem{lemma}{Lemma}
\title{\rule{6.5in}{2pt}\\Lower Bound for Randomized First Order Convex
  Optimization\\\rule[2mm]{6.5in}{0.5pt}}
\author{}
\date{}
\begin{document}
\maketitle
\vspace{-21mm}
\textbf{Blake Woodworth} \hfill \url{blake@ttic.edu}\\
\textbf{Nathan Srebro} \hfill \url{nati@ttic.edu}\\
{\small Toyota Technological Institute at Chicago, Chicago, IL 60637, USA}

\vspace{5mm}

\abstract{We provide an explicit construction and direct proof for the
  lower bound on the number of first order oracle accesses required
  for a {\em randomized } algorithm to minimize a convex Lipschitz
  function.}

\section{Introduction}
We prove lower bounds for the complexity of first-order optimization using a randomized algorithm for the following problem:
\begin{equation} \label{eq:theproblem}
\min_{x \in \mathbb{R}^d : \norm{x}\leq B} f(x)
\end{equation}
where $f$ is convex and $L$-Lipschitz continuous with respect to the 
Euclidean norm. We consider
a standard oracle access model: at each iteration the
algorithm selects, possibly at random, a vector $x \in \mathbb{R}^d, \norm{x}
\leq B$ based on the oracle's responses to previous queries. The oracle then 
returns the function value $f(x)$ and some
subgradient $g \in \partial f(x)$ chosen by the oracle.  We bound
the expected number of iterations $T$, as a function of $L$,$B$ and
$\epsilon$, needed to ensure that for any convex $L$-Lipschitz
function, any valid first order oracle, and any dimension,
$f(x_T) \leq \min_{x \in \mathbb{R}^d : \norm{x}\leq B} f(x) + \epsilon$.
We are interested in lower bounding dimension-independent performance 
(i.e.~what an algorithm can guarantee in an arbitrary dimension)
and in our constructions we allow the dimension to grow as $\epsilon\rightarrow 0$.

\citet{NemirovskiYudin} carefully study both randomized and
deterministic first order optimization algorithms and give matching
upper and lower bounds for both, establishing a tight worst-case
complexity for \eqref{eq:theproblem}, whether using randomized or
deterministic algorithms, of $\Theta(L^2B^2/\epsilon^2)$ oracle
queries.  

The lower bound for deterministic algorithms is fairly direct,
well-known and has been reproduced in many forms in books,
tutorials, and lecture notes in the ensuing four decades, as are the
lower bounds for algorithms (whether randomized or deterministic)
where the iterates $x_t$ are constrained to be in the span of previous
oracle responses.  When the iterates are constrained to be in this
span, one can ensure the first $t+1$ iterates are spanned by the first
$t$ standard basis vectors $e_1,\ldots,e_t$, and that no point in this span can
be $O(1/\sqrt{t})$-suboptimal.  For deterministic algorithms, even if
the iterates escape this span, one can adversarialy rotate the
objective function so that the algorithm only escapes in useless
directions, obtaining the exact same lower bound using a very similar
construction.  Either way, a dimensionality of
$d=\Theta(T)=\Theta(L^2B^2/\epsilon^2)$ is sufficient to construct a
function requiring $\Theta(L^2B^2/\epsilon^2)$ queries to optimize.

Analyzing randomized algorithms which are allowed to leave the span of
oracle responses is trickier: the algorithm may guess directions, and
since even if we know the algorithm, we do not know in advance which
directions it will guess, we cannot rotate the function so as to avoid
these directions.  \citet{NemirovskiYudin} do provide a detailed and
careful analysis for such randomized algorithms, using a recursive
reduction argument and without a direct construction.  To the best of
our knowledge, this lower bound has not since been simplified, and so
lower bounds for randomized algorithms are rarely if ever covered in
books, tutorials and courses.  In this note, we provide an explicit
construction establishing the following lower bound:



\begin{theorem} \label{thm:LB} For any $L,B$, $\epsilon \in
  (0,\frac{LB}{2})$, dimension $d \geq
  \frac{2L^8B^8}{\epsilon^8}\log\frac{L^4B^4}{\epsilon^4}$, and
  any randomized optimization algorithm, there exists a convex 
  $L$-Lipschitz function $f: \set{x \in \mathbb{R}^d : \norm{x}\leq B} 
  \mapsto \mathbb{R}$ and an appropriate first order oracle such 
  that the algorithm must make $\Omega(L^2B^2/\epsilon^2)$ queries 
  to the oracle in expectation in order to find an
  $\epsilon$-suboptimal point.
\end{theorem}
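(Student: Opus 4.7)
The plan is to apply Yao's minimax principle, so it suffices to exhibit a distribution over hard convex $L$-Lipschitz instances such that every deterministic algorithm requires $\Omega(L^2B^2/\epsilon^2)$ queries in expectation. Set $T = c L^2 B^2/\epsilon^2$ for a small absolute constant $c$, draw $v_1,\dots,v_T$ i.i.d.\ uniformly from the unit sphere $S^{d-1}$, and define the Nemirovski-style piecewise-linear function
$$f_v(x) \;=\; L\cdot\max_{i\in[T]}\bigl(\langle v_i,x\rangle - (i-1)\gamma\bigr),$$
with offsets $\gamma>0$ to be calibrated. This function is convex and $L$-Lipschitz. In high dimensions the $v_i$ are nearly orthogonal, so the point $x^\star\approx -\tfrac{B}{\sqrt T}\sum_i v_i$ lies in the $B$-ball with value $f_v(x^\star)\approx -LB/\sqrt T$; choosing $c$ so that $LB/\sqrt T = \Theta(\epsilon)$ ensures that reaching $\epsilon$-suboptimality demands a point with non-negligible negative correlation with most of the $v_i$.

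Alongside this I will design a \emph{resisting} first-order oracle. On query $x_t$ it returns $f_v(x_t)$ together with $L v_{i_t}$, where $i_t$ is an index attaining the maximum, with the following twist: the threshold is chosen so that when $\langle v_j, x_t\rangle$ remains below roughly $\gamma/2$ for every not-yet-revealed index $j$, the argmax is provably attained at an already-revealed index, and the oracle returns such an index instead of probing a fresh direction. Because the response is always a genuine subgradient of the fixed function $f_v$, the oracle is consistent with a single convex $L$-Lipschitz instance; it is not adaptively cooked up against the algorithm.

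The core technical step is a concentration argument. Conditional on the first $k$ revealed directions, each unrevealed $v_j$ is uniform on the sphere of their orthogonal complement, which has dimension at least $d-T$, \emph{independently} of the algorithm's queries (which are measurable functions of the revealed directions and the algorithm's own randomness). Standard spherical concentration gives $|\langle v_j, x_t\rangle| \le B\sqrt{\log(T/\delta)/(d-T)}$ with probability $1-\delta$. A union bound over the at most $T$ queries and $T$ candidate directions, together with $d\gtrsim T^4 \log T$ (which is precisely the hypothesis $d\ge 2L^8B^8/\epsilon^8\log(L^4B^4/\epsilon^4)$ after substituting $T = \Theta(L^2B^2/\epsilon^2)$), guarantees that this inner product stays below $\gamma/2$ uniformly. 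On this high-probability event each query reveals at most one new direction; hence after fewer than $T/2$ queries the iterate is essentially orthogonal to at least $T/2$ of the $v_j$'s, forcing $f_v(x_t) \ge -LB/(2\sqrt T)$ and precluding $\epsilon$-suboptimality.

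The main obstacle is the joint calibration of the offset $\gamma$, the resisting threshold, and the dimension. The threshold must be small enough that spherical concentration of $\langle v_j, x_t\rangle$ beats it with $T^2$ union-bound slack, while $\gamma$ must exceed the threshold by enough that the returned index is a true maximizer (so that $L v_{i_t}\in\partial f_v(x_t)$), yet $T\gamma$ must remain small compared to $LB/\sqrt T$ so that the near-optimal value $-LB/\sqrt T$ is still achievable. Meeting all three constraints simultaneously is exactly what forces the $d\gtrsim T^4\log T$ bound in the theorem's hypothesis, and will be the primary source of bookkeeping in the formal argument.
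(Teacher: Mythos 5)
Your proposal follows essentially the same strategy as the paper: Yao's minimax principle, the Nemirovski chain function $\max_j(\langle v_j,x\rangle - j\gamma)$ with randomly oriented $v_j$, a resisting oracle revealing one new direction per query, and a spherical-concentration/union-bound argument that no query has large inner product with an unrevealed direction. The dimension requirement $d\gtrsim T^4\log T$ also matches. Two differences are cosmetic (the paper draws $\{v_j\}$ as a uniformly random \emph{orthonormal} set rather than i.i.d.\ on the sphere, which removes some near-orthogonality bookkeeping, and makes the conditional law of unrevealed directions live exactly on an orthogonal complement).

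However, the central step of your argument has a genuine gap. You claim that, conditioned on the revealed directions, each unrevealed $v_j$ is uniform on the complementary sphere ``independently of the algorithm's queries (which are measurable functions of the revealed directions and the algorithm's own randomness).'' This reasoning is circular. The queries $x_1,\dots,x_t$ are functions of the revealed directions \emph{only on the event} that the oracle has so far returned only low-index directions; that event itself depends on the inner products $\langle v_j,x_s\rangle$ for the \emph{un}revealed $j$. So the conditioning you actually need is on ``$G_{<t}$ and the revealed directions,'' and under that conditioning it is no longer obvious that the unrevealed $v_j$ remain spherically symmetric --- the event $G_{<t}$ is not a measurable function of the revealed directions alone. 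The paper's Lemma \ref{lem:1term} is devoted precisely to patching this: it proves the conditional density of $V_{\geq t}$ given $G_{<t}$ and $V_{<t}$ is invariant under rotations fixing $\spn{x^{(1)},\dots,x^{(t-1)},v_1,\dots,v_{t-1}}$, by an induction showing $G_{<t}$ holds for $V$ iff it holds for $RV$ (this requires Lemma \ref{lem:oracleresponses} and the determinism of the algorithm). The acknowledgments note that the original version of the paper had an error at exactly this point, so this is not a routine step to wave past. Before proceeding, you would need to (i) state precisely what event you condition on, and (ii) prove rotational invariance of the conditional distribution of the unrevealed $v_j$ under that conditioning, essentially replicating the paper's induction.
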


Our construction and proof directly captures the following intuition:
if the dimension is large enough, blindly guessing a direction becomes
increasingly difficult, and the algorithm should not gain much by such
random guessing.  In the standard construction used for the
deterministic lower bound, guessing a direction actually does provide
information on all useful directions.  However, by slightly perturbing the
standard construction, we are able to avoid such information leakage.
To do so, we use a technique we recently developed in order to analyze
finite sum structured optimization problems \cite{Woodworth16}.

In this note we only consider Lipschitz (non-smooth) functions without
an assumption of strong convexity.  A reduction or simple modification
to the construction can be used to establish a lower bound for
Lipschitz (non-smooth) strongly convex functions.  Applying the same
technique we use here to the standard lower bound construction for
smooth functions leads to lower bounds for randomized
algorithms for smooth non-strongly-convex and smooth strongly
convex first order optimization too.  All of these lower bounds would
match those for deterministic optimization, with a polynomial increase
in the dimension required.  This polynomial increase can likely
be reduced to a smaller polynomial through more careful
analysis.

Theorem \ref{thm:LB} (which we reiterate also follows from the
more detailed analysis of \citeauthor{NemirovskiYudin}) shows that
randomization cannot help first order optimization.  It
is important to emphasize that this should not be taken for granted, and
that in other situations randomization {\em could} be beneficial.
For example, when optimizing finite sum structured objectives, randomization provably
reduces the oracle complexity \cite{Woodworth16}.  It is thus
important to specifically and carefully consider randomized algorithms
when proving oracle lower bounds, and we hope this note will aid in
such analysis.

We would like to thank the authors of \cite{Carmon17} Yair Carmon, John Duchi, Oliver Hinder, and Aaron Sidford for pointing out a mistake with our original proof of Lemma \ref{lem:1term}, which has since been corrected. 

\section{Proof of Theorem \ref{thm:LB}}
Without loss of generality assume $L = B = 1$. Consider a family of functions $\mathcal{F}$ of the form
\begin{equation}
f(x) = \max_{1\leq j \leq k} \left(\inner{x}{v_j} - j c \right)
\end{equation} 
where $k = \frac{1}{4\epsilon^2}$, $c = \frac{\epsilon}{k}$, and the vectors $v_j$ are an orthonormal set in $\mathbb{R}^d$. Each of these functions is the maximum of linear functions thus convex and $1$-Lipschitz.
Drawing the orthonormal set of vectors $v_j$ uniformly at random specifies a distribution over the family of functions $\mathcal{F}$. Our approach will be to show that any \emph{deterministic} optimization algorithm must make at least $\Omega(1/\epsilon^2)$ oracle queries in expectation over the randomness in the choice of $f$. This implies through Yao's minimax principle a lower bound on the expected number of queries needed by a randomized algorithm on the worst-case function in $\mathcal{F}$. Therefore, for the remainder of the proof we need only consider deterministic optimization algorithms and functions drawn from this distribution over $\mathcal{F}$.

First, we show that minimizing a given function $f$ amounts to finding a vector $x$ which has significant negative correlation with \emph{all} of the vectors $v_j$. Consider the unit vector $\hat{x} = -\frac{1}{\sqrt{k}}\sum_{j=1}^k v_j$
\begin{equation}
f(\hat{x}) 
= \max_{1\leq j \leq k} \left( \inner{\hat{x}}{v_j} - j c \right) 
= -\frac{1}{\sqrt{k}} - c 
= -2\epsilon - c
\geq f(x^*)
\end{equation}
Therefore, for any $x$ such that $\inner{x}{v_j} > -\frac{c}{2}$ for some $j$, 
\begin{equation} \label{eq:small_ip_suboptimal}
f(x) 
\geq \inner{x}{v_j} - jc 
> -\frac{c}{2} - kc 
= -\frac{c}{2} - \epsilon 
> f(\hat{x}) + \epsilon 
\geq f(x^*) + \epsilon
\end{equation}
Consequently, any such $x$ cannot be $\epsilon$-suboptimal. Therefore, in order to show that the expected number of oracle queries is $\Omega(k) = \Omega(1/\epsilon^2)$, it suffices to show that the following event occurs with constant probability:
\begin{equation}
E = \left\llbracket\ \forall t \leq k\ \forall j \geq t\ \abs{\inner{x^{(t)}}{v_j}} < \frac{c}{2}\ \right\rrbracket
\end{equation}

Let $S_t = \spn{x^{(1)},...,x^{(t)}, v_1, ..., v_{t}}$ and let $S_t^\perp$ be its orthogonal complement. Let $P_t$ and $P^\perp_t$ be (orthogonal) projection operators onto $S_t$ and $S_t^\perp$ respectively. Consider the events
\begin{equation}
G_t = \left\llbracket\ \forall j \geq t\ \abs{\inner{\normalized{P_{t-1}^{\perp}x^{(t)}}}{v_j}} < \frac{c}{2(\sqrt{2}+\sqrt{k-1})}\ \right\rrbracket
\end{equation}
These events are useful because:

\begin{lemma} \label{lem:GimpE}
$\bigcap_{t=1}^k G_t \implies E$
\end{lemma}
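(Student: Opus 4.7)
Define $u_s := P_{s-1}^\perp x^{(s)}/\|P_{s-1}^\perp x^{(s)}\|$, the unit vector appearing in $G_s$. Because $u_s\in S_{s-1}^\perp$ while $u_{s'}\in S_{s'}\subseteq S_{s-1}$ for $s'<s$, the family $\{u_s\}$ is orthonormal, and a short induction on $s$ yields $S_s=\mathrm{span}\{v_1,\ldots,v_s,u_1,\ldots,u_s\}$. The event $G_s$ then reads exactly $|\langle u_s,v_j\rangle|<\epsilon_k := c/(2(\sqrt{2}+\sqrt{k-1}))$ for every $j\ge s$.

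Fix $t\le k$ and $j\ge t$. Let $V_{t-1}:=\mathrm{span}(v_1,\ldots,v_{t-1})$ and $W_{t-1}:=V_{t-1}^\perp\cap S_{t-1}$, so $S_{t-1}=V_{t-1}\oplus W_{t-1}$ orthogonally. Decompose
\[
x^{(t)} = P_{V_{t-1}}x^{(t)} + P_{W_{t-1}}x^{(t)} + \|P_{t-1}^\perp x^{(t)}\|\,u_t
\]
into three pairwise orthogonal summands; since $v_j\perp V_{t-1}$ for $j\ge t$, the $V_{t-1}$-piece contributes nothing to $\langle x^{(t)},v_j\rangle$. Bounding the $u_t$-piece by $G_t$ and the $W_{t-1}$-piece by Cauchy--Schwarz, then applying Cauchy--Schwarz in $\mathbb{R}^2$ to the pairs $(\|P_{W_{t-1}}x^{(t)}\|,\|P_{t-1}^\perp x^{(t)}\|)$ and $(\|P_{W_{t-1}}v_j\|,\epsilon_k)$, and using $\|P_{W_{t-1}}x^{(t)}\|^2+\|P_{t-1}^\perp x^{(t)}\|^2\le\|x^{(t)}\|^2\le 1$, yields
\[
|\langle x^{(t)},v_j\rangle| \le \sqrt{\|P_{W_{t-1}}v_j\|^2 + \epsilon_k^2}.
\]
If I can show $\|P_{W_{t-1}}v_j\|^2\le (k-1)\epsilon_k^2$ (up to negligible correction), the right-hand side is at most $\sqrt{k}\,\epsilon_k<c/2$, by the numerical fact $\sqrt{k}<\sqrt{2}+\sqrt{k-1}$. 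Intersecting over $t$ then gives $E$.

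To bound $\|P_{W_{t-1}}v_j\|$, observe that $W_{t-1}$ is spanned by the $t-1$ vectors $\tilde u_s := u_s - P_{V_{t-1}}u_s$, and since $v_j\perp V_{t-1}$ we have $\langle v_j,\tilde u_s\rangle = \langle v_j,u_s\rangle$, each bounded by $\epsilon_k$ via $G_s$ (valid since $j\ge t>s$ implies $j\ge s$). Expressing $P_{W_{t-1}}v_j$ in this basis gives $\|P_{W_{t-1}}v_j\|^2 = \xi^T\tilde G^{-1}\xi$ with $\xi_s=\langle u_s,v_j\rangle$ and Gram matrix $\tilde G = I - A^TA$, where $A_{rs}:=\langle u_s,v_r\rangle$ vanishes for $r<s$ and is bounded by $\epsilon_k$ for $r\ge s$. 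The principal obstacle is that $\{\tilde u_s\}$ is not orthonormal, so $\tilde G^{-1}\neq I$; however $\|A^TA\|_{\mathrm{op}}\le\|A\|_F^2\le \tfrac{k(k-1)}{2}\epsilon_k^2$ is vanishingly small in the regime $k=1/(4\epsilon^2)$, $c=\epsilon/k$ of the theorem, so $\tilde G^{-1}$ is negligibly close to the identity and the resulting correction is comfortably absorbed by the slack $\sqrt{2}+\sqrt{k-1}-\sqrt{k}$, for which the constant $\sqrt{2}$ in the denominator of $\epsilon_k$ is precisely calibrated.
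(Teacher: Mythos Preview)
Your argument is correct and takes a genuinely different route from the paper's. Both proofs reduce to bounding the same quantity, since $v_j\perp V_{t-1}$ forces $\|P_{t-1}v_j\|=\|P_{W_{t-1}}v_j\|$; the divergence is in how this norm is controlled. The paper builds an alternating Gram--Schmidt basis $\bigl(\,P_{s-1}^\perp x^{(s)}/\|\cdot\|,\ \hat P_{s-1}^\perp v_s/\|\cdot\|\,\bigr)_{s<t}$ for $S_{t-1}$ and proves by induction on $t$ that $\|P_{t-1}v_j\|^2\le \tfrac{c^2(t-1)}{2(\sqrt{2}+\sqrt{k-1})^2}$, feeding the inductive hypothesis back in to handle the $v_s$-basis vectors. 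You instead pass to the non-orthogonal basis $\{\tilde u_s\}_{s<t}$ of $W_{t-1}$, observe that $\langle v_j,\tilde u_s\rangle=\langle v_j,u_s\rangle$ are exactly the quantities controlled by the events $G_s$, and absorb the non-orthogonality into the Gram matrix $\tilde G=I-A^TA$. The triangularity of $A$ (from $u_s\perp v_r$ for $r<s$) together with the $G_s$-bounds gives $\|A\|_F^2\le\tfrac{k(k-1)}{2}\epsilon_k^2<\tfrac{kc^2}{8}=\tfrac{\epsilon^2}{8k}=\tfrac{\epsilon^4}{2}$, so $\tilde G^{-1}$ is within $O(\epsilon^4)$ of the identity and $\|P_{W_{t-1}}v_j\|^2\le (t-1)\epsilon_k^2/(1-\epsilon^4/2)$; the correction $(k-1)\cdot\tfrac{\epsilon^4/2}{1-\epsilon^4/2}<\epsilon^2/4$ is indeed dwarfed by the slack $(\sqrt{2}+\sqrt{k-1})^2-k=1+2\sqrt{2(k-1)}$, so your final sentence, while terse, is numerically sound. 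Your approach trades the paper's induction for a one-shot linear-algebraic estimate; it is arguably cleaner, though the paper's induction yields a slightly sharper constant and keeps everything elementary without invoking operator norms.
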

\begin{proof}
Let $G_{\leq t}$ denote $\bigcap_{t'=1}^t G_{t'}$. 
It suffices to show that for each $t\leq k$, $G_{\leq t} \implies \forall j \geq t\  \abs{\inner{x^{(t)}}{v_j}} < \frac{c}{2}$.
For each $t \leq k$ and $j \geq t$
\begin{equation}
\begin{aligned} \label{eq:GimpE1}
\abs{\inner{x^{(t)}}{v_j}} 
&\leq \norm{x^{(t)}}\abs{\inner{\normediter{t}}{P_{t-1}v_j}} + \norm{x^{(t)}}\abs{\inner{\normediter{t}}{P_{t-1}^{\perp}v_j}} \\
&\leq \norm{P_{t-1}v_j} + \abs{\inner{\frac{P_{t-1}^{\perp}x^{(t)}}{\norm{x^{(t)}}}}{v_j}} \\
&\leq \norm{P_{t-1}v_j}  + \abs{\inner{\normalized{P_{t-1}^{\perp}x^{(t)}}}{v_j}} \\
&\leq \norm{P_{t-1}v_j}  + \frac{c}{2(\sqrt{2} + \sqrt{k-1})}
\end{aligned}
\end{equation}
First, we decomposed $v_j$ into its $S_{t-1}$ and $S_{t-1}^\perp$ components and applied the triangle inequality. Next, we used that $\norm{x^{(t)}} \leq 1$ and that the orthogonal projection operator $P_{t-1}^\perp$ is self-adjoint. Finally, we used that the projection operator is non-expansive and then applied the definition of $G_t$.

Next, we will prove by induction on $t$ that for all $t \leq k$ and all $j \geq t$, $G_{<t} \implies \norm{P_{t-1}v_j}^2 \leq \frac{c^2(t-1)}{2(\sqrt{2}+\sqrt{k-1})^2}$. The case $t=1$ is trivial since the left hand side is the projection of $v_j$ onto the empty set. 

For the inductive step, fix any $t \leq k$ and $j \geq t$. Let $\hat{P}_t$ project onto $\spn{x^{(1)},...,x^{(t+1)},v_1,...,v_{t}}$ (this includes $x^{(t+1)}$ in contrast with $P_t$) and let $\hat{P}_t^\perp$ be the projection onto the orthogonal subspace.
Since $\set{x^{(1)},...,x^{(t-1)}, v_1,...,v_{t-1}}$ spans $S_{t-1}$, the Gram-Schmidt vectors
\begin{equation}
\normalized{P_0^\perp x^{(1)}}, \normalized{\hat{P}_0^\perp v_1}, \normalized{P_1^\perp x^{(2)}}, \normalized{\hat{P}_1^\perp v_2}, ..., \normalized{P_{t-2}^\perp x^{(t-1)}}, \normalized{\hat{P}_{t-2}^\perp v_{t-1}}
\end{equation}
are an orthonormal basis for $S_{t-1}$ (after ignoring any zero vectors that arise from projection).

We now write $\norm{P_{t-1}v_j}$ in terms of this orthonormal basis:
\begin{equation}
\begin{aligned} \label{eq:GimpE2}
\norm{P_{t-1} v_j}^2 
&= \sum_{i=1}^{t-1} \inner{\normalized{P_{i-1}^\perp x^{(i)}}}{v_j}^2
       + \sum_{i=1}^{t-1} \inner{\normalized{\hat{P}_{i-1}^\perp v_i}}{v_j}^2 \\
&\leq \frac{c^2(t-1)}{4(\sqrt{2} + \sqrt{k-1})^2} + \sum_{i=1}^{t-1} \frac{1}{\norm{\hat{P}_{i-1}^\perp v_i}^2}\inner{\hat{P}_{i-1}^\perp v_i}{v_j}^2
\end{aligned}
\end{equation}
The inequality follows from the definition of $G_{<t}$. We must now bound the second term of \eqref{eq:GimpE2}. Focusing on the inner product one individual term in the sum
\begin{equation}\label{eq:rewriteprojection}
\begin{aligned} 
\abs{\inner{\hat{P}_{i-1}^\perp v_i}{v_j}} 
&= \abs{\inner{v_i}{v_j} - \inner{\hat{P}_{i-1}v_i}{v_j}} \\
&= \abs{\inner{P_{i-1}v_i}{v_j} + \inner{\inner{\normalized{P_{i-1}^\perp x^{(i)}}}{v_i}\normalized{P_{i-1}^\perp x^{(i)}}}{v_j}} \\
&\leq \abs{\inner{P_{i-1}v_i}{P_{i-1}v_j}} + \abs{\inner{\normalized{P_{i-1}^\perp x^{(i)}}}{v_i}\inner{\normalized{P_{i-1}^\perp x^{(i)}}}{v_j}}
\end{aligned}
\end{equation}
By the Cauchy-Schwarz inequality and the inductive hypothesis, the first term is bounded by $\norm{P_{i-1}v_i}\norm{P_{i-1}v_j} \leq \frac{c^2(i-1)}{2(\sqrt{2}+\sqrt{k-1})^2}$. By the definition of $G_{<t}$, the second term is bounded by $\frac{c^2}{4(\sqrt{2}+\sqrt{k-1})^2}$.
Furthermore, by our choice of $c = \frac{\epsilon}{k}$ and $k = \frac{1}{4\epsilon^2}$, $c = \frac{1}{2\sqrt{k}}$ so we conclude that
\begin{equation} \label{eq:GimpE2summand}
\begin{aligned}
\abs{\inner{\hat{P}_{i-1}^\perp v_i}{v_j}}
&\leq \frac{c^2(2i-1)}{4(\sqrt{2}+\sqrt{k-1})^2} \\
&= \frac{c}{4(\sqrt{2}+\sqrt{k-1})} \frac{2i-1}{2\sqrt{k}(\sqrt{2}+\sqrt{k-1})} \\
&\leq \frac{c}{4(\sqrt{2}+\sqrt{k-1})} \frac{2i-1}{2k} \\
&\leq \frac{c}{4(\sqrt{2}+\sqrt{k-1})}
\end{aligned}
\end{equation}

We have now upper bounded the inner products in \eqref{eq:GimpE2}, it remains to lower bound the norm in the denominator. Rewriting the projection $\hat{P}_{i-1}^\perp$ as in \eqref{eq:rewriteprojection}:
\begin{equation}
\begin{aligned}
\norm{\hat{P}_{i-1}^\perp v_i}^2 
&= \abs{\inner{\hat{P}_{i-1}^\perp v_i}{v_i}} \\
&= 1 - \norm{P_{i-1}v_i}^2 - \inner{\normalized{P_{i-1}^\perp x^{(i)}}}{v_i}^2 \\
&\geq 1 - \frac{c^2(i-1)}{2(\sqrt{2}+\sqrt{k-1})^2} - \frac{c^2}{4(\sqrt{2}+\sqrt{k-1})^2}
\end{aligned}
\end{equation}
This quantity is at least $\frac{1}{2}$ because $c = \frac{1}{2\sqrt{k}} < 1$ so
\begin{equation}
\begin{aligned}
\frac{c^2(i-1)}{2(\sqrt{2}+\sqrt{k-1})^2} + \frac{c^2}{4(\sqrt{2}+\sqrt{k-1})^2} 
&< \frac{2i-1}{(\sqrt{2}+\sqrt{k-1})^2} \\
&= \left(\frac{\sqrt{i-\frac{1}{2}}}{\sqrt{2}+\sqrt{k-1}}\right)^2 \\
&< \frac{1}{2} \left( \sqrt{\frac{i - \frac{1}{2}}{k}} \right)^2\\
&\leq \frac{1}{2}
\end{aligned}
\end{equation}
Combining this with \eqref{eq:GimpE2summand} and returning to \eqref{eq:GimpE2} we have that
\begin{equation}
\begin{aligned}
\norm{P_{t-1} v_j}^2 
&\leq \frac{c^2(t-1)}{4(\sqrt{2} + \sqrt{k-1})^2} + \sum_{i=1}^{t-1} \frac{\inner{\hat{P}_{i-1}^\perp v_i}{v_j}^2}{\norm{\hat{P}_{i-1}^\perp v_i}^2} \\
&\leq \frac{c^2(t-1)}{4(\sqrt{2} + \sqrt{k-1})^2} + \sum_{i=1}^{t-1} 4\left(\frac{c}{4(\sqrt{2}+\sqrt{k-1})}\right)^2 \\
&= \frac{c^2(t-1)}{4(\sqrt{2} + \sqrt{k-1})^2} + \frac{c^2(t-1)}{4(\sqrt{2} + \sqrt{k-1})^2} = \frac{c^2(t-1)}{2(\sqrt{2} + \sqrt{k-1})^2}
\end{aligned}
\end{equation}
which completes the inductive step. Finally, we return to \eqref{eq:GimpE1} and conclude that
\begin{equation}
\begin{aligned}
\abs{\inner{x^{(t)}}{v_j}} 
&\leq \norm{P_{t-1}v_j}  + \frac{c}{2(\sqrt{2} + \sqrt{k-1})} \\
&\leq \sqrt{\frac{c^2(t-1)}{2(\sqrt{2} + \sqrt{k-1})^2}} + \frac{c}{2(\sqrt{2} + \sqrt{k-1})} \\
&= \frac{c(\sqrt{2} + \sqrt{t-1})}{2(\sqrt{2} + \sqrt{k-1})} \leq \frac{c}{2}
\end{aligned}
\end{equation}
This completes the proof.
\end{proof}

In our model of computation, the oracle can provide the algorithm with any subgradient at the query point. We can therefore design a resisting oracle which returns subgradients that are as uninformative as possible. At a given point $x$, the subdifferential of $f$ is
\begin{equation}
\partial f(x) = \textrm{Conv}\set{v_\ell\ :\ \ell \in \argmax_{1\leq j \leq k} \left(\inner{x}{v_j} - j c \right)}
\end{equation}
and our resisting oracle will return as a subgradient
\begin{equation}
v_{\ell}\quad\text{where}\quad \ell = \min\set{\ \argmax_{1\leq j \leq k} \left(\inner{x}{v_j} - j c \right)\ }
\end{equation}
That is, the returned subgradient will always be a single vector $v_\ell$ for the smallest value of $\ell$ that corresponds to a valid subgradient.

\begin{lemma} \label{lem:oracleresponses}
For each $t \leq k$, let $g^{(t)} \in \partial f\left(x^{(t)}\right)$ be the subgradient returned by the oracle. Then $G_{\leq t} \implies g^{(t)} \in \set{v_1,...,v_t}$.
\end{lemma}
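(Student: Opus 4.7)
The plan is to show that the index
\[
\ell = \min\set{\argmax_{1\leq j \leq k} \left(\inner{x^{(t)}}{v_j} - jc\right)}
\]
returned by the resisting oracle satisfies $\ell \leq t$, which is exactly what the lemma demands. Since the $\argmax$ is non-empty, it suffices to rule out every $j > t$ from the $\argmax$, and to do that I only need to exhibit one index $j^\star \leq t$ whose objective value strictly dominates the value at every $j > t$. The natural choice is $j^\star = t$.

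The key input is the intermediate statement proved within the induction of Lemma \ref{lem:GimpE}, namely that $G_{\leq t}$ implies $\abs{\inner{x^{(t)}}{v_j}} < c/2$ for all $j \geq t$. Applied at $j = t$ this gives $\inner{x^{(t)}}{v_t} - tc > -c/2 - tc$, and applied at any $j > t$ (so $j \geq t+1$) it gives
\[
\inner{x^{(t)}}{v_j} - jc < \frac{c}{2} - jc \leq \frac{c}{2} - (t+1)c = -\frac{c}{2} - tc.
\]
Chaining the two bounds yields $\inner{x^{(t)}}{v_t} - tc > \inner{x^{(t)}}{v_j} - jc$ for every $j > t$, so no index above $t$ can lie in the $\argmax$. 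Consequently the smallest maximizer satisfies $\ell \leq t$, and the oracle's response $g^{(t)} = v_\ell$ lies in $\set{v_1,\ldots,v_t}$.

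I do not expect a genuine obstacle: the content of the argument is entirely the intermediate bound already established in Lemma \ref{lem:GimpE}, together with the design choice that the gap $c$ between consecutive affine pieces of $f$ exceeds twice the threshold $c/2$ controlling $\abs{\inner{x^{(t)}}{v_j}}$ for $j \geq t$. The one small point to verify is that Lemma \ref{lem:GimpE}'s proof in fact supplies the per-$t$ implication $G_{\leq t} \implies \forall j \geq t: \abs{\inner{x^{(t)}}{v_j}} < c/2$ rather than just its conjunction over all $t$, which is immediate from the inductive statement used in that proof.
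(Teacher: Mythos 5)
Your proof is correct and follows essentially the same route as the paper's: both invoke the intermediate bound from Lemma \ref{lem:GimpE} at $j=t$ and at $j>t$, chain the inequalities $\inner{x^{(t)}}{v_t} - tc > -c/2 - tc = c/2 - (t+1)c > \inner{x^{(t)}}{v_j} - jc$, and conclude that no $j>t$ can be in the $\argmax$ of the resisting oracle.
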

\begin{proof}
This follows from the structure of the objective function $f$ and our choice of subgradient oracle. In the proof of Lemma \ref{lem:GimpE}, we established that $G_{\leq t} \implies \forall j \geq t\ \abs{\inner{x^{(t)}}{v_j}} < \frac{c}{2}$. Thus for any $j > t$
\begin{equation}
\inner{x^{(t)}}{v_t} - tc > -\frac{c}{2} - tc = \frac{c}{2} - (t+1)c > \inner{x^{(t)}}{v_j} - jc 
\end{equation}
Therefore, no $j > t$ can index a maximizing term in $f$ so $g^{(t)} \subseteq \set{v_1,...,v_t}$.
\end{proof}

By Lemma \ref{lem:GimpE} and the chain rule of probability
\begin{equation} \label{eq:chainrule}
\mathbb{P}\left[ E \right] \geq \mathbb{P}\left[ \bigcap_{t=1}^k G_t \right] = \prod_{t=1}^k \mathbb{P}\left[ G_t\ \middle|\ G_{<t} \right]
\end{equation}
Focusing on a single term in the product:

\begin{lemma} \label{lem:1term}
For any $t \leq k$, $\mathbb{P}\left[ G_t\ \middle|\ G_{<t} \right] > 1 - (k-t+1)\exp\left( \frac{-c^2(d-2t+1)}{8(\sqrt{2}+\sqrt{k-1})^2}\right)$
\end{lemma}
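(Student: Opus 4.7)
The plan is to show that $G_t$ holds with high conditional probability by establishing that, under $G_{<t}$, the unit vector $u := \normalized{P_{t-1}^\perp x^{(t)}}$ appearing in the definition of $G_t$ is measurable with respect to $v_1, \ldots, v_{t-1}$, and then applying a spherical concentration bound to each inner product $\inner{u}{v_j}$ for $j \geq t$.

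First I would invoke Lemma \ref{lem:oracleresponses}: under $G_{<t}$, every previous oracle response $g^{(t')}$ with $t' \leq t-1$ lies in $\set{v_1, \ldots, v_{t-1}}$, so since the algorithm is deterministic, the iterate $x^{(t)}$ -- and hence $u$ -- is a deterministic function of $v_1, \ldots, v_{t-1}$ alone. Moreover $u \in S_{t-1}^\perp$, which has dimension at least $d - \dim S_{t-1} \geq d - 2(t-1) = d - 2t + 2$.

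Second, I would analyse the conditional distribution of each $v_j$ with $j \geq t$. Conditional on $v_1, \ldots, v_{t-1}$, the tuple $(v_t, \ldots, v_k)$ is Haar-uniform on the Stiefel manifold of orthonormal $(k-t+1)$-tuples in the orthogonal complement of $\spn{v_1, \ldots, v_{t-1}}$, so by rotational symmetry the marginal of each $v_j$ is uniform on the unit sphere of that $(d-t+1)$-dimensional complement. Since $u$ is a fixed unit vector in $S_{t-1}^\perp$, only the component of $v_j$ in $S_{t-1}^\perp$ contributes to $\inner{u}{v_j}$, and a standard spherical tail bound (e.g.\ via the Gaussian representation of a uniform vector on the sphere, applied inside the $(d-2t+2)$-dimensional subspace $S_{t-1}^\perp$) yields
\begin{equation*}
\mathbb{P}\!\left[\,|\inner{u}{v_j}| \geq \tfrac{c}{2(\sqrt 2 + \sqrt{k-1})}\ \Big|\ v_1, \ldots, v_{t-1}\,\right] \;\leq\; \exp\!\left(-\tfrac{c^2(d-2t+1)}{8(\sqrt 2 + \sqrt{k-1})^2}\right).
\end{equation*}
A union bound over the $k-t+1$ choices of $j$ then yields the stated estimate for $\mathbb{P}[G_t^c \mid v_1, \ldots, v_{t-1}]$.

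The main obstacle is translating this marginal-in-$(v_1,\ldots,v_{t-1})$ bound into a bound on $\mathbb{P}[G_t^c \mid G_{<t}]$, since $G_{<t}$ itself constrains $v_t, \ldots, v_k$ through the tubular conditions $|\inner{u_{t'}}{v_j}| < \tfrac{c}{2(\sqrt 2 + \sqrt{k-1})}$ for $t' < t$, $j \geq t'$, and so reshapes the conditional distribution of the future $v_j$. I would address this by writing
\begin{equation*}
\mathbb{P}[G_t^c \mid G_{<t}] \;=\; \mathbb{E}\!\left[\,\mathbb{P}[G_t^c \mid v_1, \ldots, v_{t-1}, G_{<t}]\ \Big|\ G_{<t}\,\right],
\end{equation*}
and arguing that the inner conditional probability obeys the same tail bound as the unconditioned marginal: the additional constraints from $G_{<t}$ restrict each $v_j$ only to an intersection of tubes around hyperplanes orthogonal to the fixed vectors $u_{t'}$ for $t' < t$, which are directions different from $u = u_t$, and a coupling or direct Haar-measure calculation on the restricted Stiefel manifold lets the marginal spherical concentration pass through the conditioning without degradation. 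Combining with the union bound over $j$ gives the claimed inequality.
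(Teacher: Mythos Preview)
Your first two steps are correct and match the paper's setup: under $G_{<t}$ the oracle has only revealed $v_1,\ldots,v_{t-1}$, so $x^{(t)}$, $S_{t-1}$, and $u=\normalized{P_{t-1}^\perp x^{(t)}}$ are measurable in $V_{<t}$; and unconditionally $(v_t,\ldots,v_k)$ is Haar on the Stiefel manifold in $\spn{v_1,\ldots,v_{t-1}}^\perp$. The genuine gap is in your final paragraph. The event $G_{<t}$ constrains each $v_j$ (for $j\geq t$) through the inequalities $\abs{\inner{u_{t'}}{v_j}}<\delta$ for all $t'<t$, and the $v_j$ are in addition coupled by orthonormality; you assert that ``a coupling or direct Haar-measure calculation \ldots lets the marginal spherical concentration pass through the conditioning without degradation,'' but you do not carry this out, and a generic conditioning argument on tubes does not by itself give the needed bound with the stated constant. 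Saying the $u_{t'}$ are ``directions different from $u$'' is also not enough; what you actually need (and implicitly have) is that every $u_{t'}$ lies in $S_{t-1}$ and is therefore \emph{orthogonal} to $u\in S_{t-1}^\perp$.

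The paper closes exactly this gap by a rotational-invariance argument rather than a coupling: for any rotation $R$ fixing $S_{t-1}$ pointwise, one shows $\mathbb{P}(G_{<t}\mid V)=\mathbb{P}(G_{<t}\mid RV)$. This uses that (i) $Rv_i=v_i$ for $i<t$, so the oracle responses and the queries $x^{(1)},\ldots,x^{(t)}$ are unchanged, and (ii) each vector $P_{t'-1}^\perp x^{(t')}$ appearing in $G_{<t}$ lies in $S_{t-1}$, so $\inner{P_{t'-1}^\perp x^{(t')}}{Rv_j}=\inner{R^T P_{t'-1}^\perp x^{(t')}}{v_j}=\inner{P_{t'-1}^\perp x^{(t')}}{v_j}$. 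Together with the Haar invariance $p_V(V)=p_V(RV)$, Bayes' rule gives that $p_{V_{\geq t}}(\cdot\mid G_{<t},V_{<t})$ is invariant under all rotations fixing $S_{t-1}$, hence $\normalized{P_{t-1}^\perp v_j}$ is \emph{exactly} uniform on the unit sphere of $S_{t-1}^\perp$ conditionally on $G_{<t},V_{<t}$. At that point the spherical-cap bound and the union bound over $j=t,\ldots,k$ give the lemma with the stated exponent $(d-2t+1)$. Your proposal is on the right track, but you should replace the hand-waved last paragraph with this invariance argument (or an equivalently rigorous computation) to make the proof complete.
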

\begin{proof}
The key to lower bounding $\mathbb{P}\left[ G_t\ \middle|\ G_{<t} \right]$ is to show that for $j \geq t$ the vector $\normalized{P_{t-1}^\perp v_j}$ is uniformly distributed on the unit sphere in $S_{t-1}^\perp$ conditioned on $G_{<t}$ and $\set{v_1,...,v_{t-1}}$. If we can show this, then the inner product in the definition of $G_t$ is effectively between a fixed vector and a random unit vector, and the probability that this is large decreases rapidly as the dimension grows.

Fix an arbitrary $t \leq k$ and $j \geq t$. Let $V_{<t} := \set{v_1, ...,v_{t-1}}$ be any set of orthonormal vectors in $\mathbb{R}^d$. We will show that the density $p_{V_{\geq t}}\left(V_{\geq t}\ \middle|\ G_{<t}, V_{<t}\right)$ is invariant under rotations which preserve $\set{x^{(1)},...,x^{(t)}, v_1,...,v_{t-1}}$.

Let $R$ be any rotation $R^TR = I_{d\times d}$ such that $\forall w\in\spn{x^{(1)},...,x^{(t-1)}, v_1,...,v_{t-1}}$ $Rw = R^Tw = w$. We will show that $p_{V_{\geq t}}\left(V_{\geq t}\ \middle|\ G_{<t}, V_{<t}\right) = p_{V_{\geq t}}\left(RV_{\geq t}\ \middle|\ G_{<t}, V_{<t}\right)$. To begin
\begin{equation}
p_{V_{\geq t}}\left(V_{\geq t}\ \middle|\ G_{<t}, V_{<t}\right) =
\frac{\mathbb{P}\left( G_{<t}\ \middle|\ V \right)p_V(V)}{\mathbb{P}\left( G_{<t}\ \middle|\ V_{<t} \right)p_{V_{<t}}(V_{<t})}
\end{equation}
and
\begin{equation}
p_{V_{\geq t}}\left(RV_{\geq t}\ \middle|\ G_{<t}, V_{<t}\right) =
\frac{\mathbb{P}\left( G_{<t}\ \middle|\ V \right)p_V(RV)}{\mathbb{P}\left( G_{<t}\ \middle|\ V_{<t} \right)p_{V_{<t}}(V_{<t})}
\end{equation}
Since $V = \set{v_1,...,v_k}$ is marginally distributed uniformly, $p_V(V) = p_V(RV)$, so it only remains to show that $\mathbb{P}\left( G_{<t}\ \middle|\ V \right) = \mathbb{P}\left( G_{<t}\ \middle|\ RV \right)$. Recall that at this time we are considering an arbitrary \emph{deterministic} algorithm minimizing a randomly selected $f$. Thus for any particular $V$, which fixes $f$, either $G_{<t}$ holds or it does not--so the probabilities are either 0 or 1. 

We will show by induction that for every $i < t$, if $\mathbb{P}\left( G_{<t}\ \middle|\ V \right) = 1$ then $\mathbb{P}\left( G_{<t}\ \middle|\ RV \right) = 1$ too. The case $i = 1$ is trivial since $G_{<1}$ is independent of $V$. Consider now some $1 < i < t$, and suppose that $\mathbb{P}\left( G_{<i}\ \middle|\ V \right) = 1$. Since $G_{<i} \implies G_{<s}$ for $s < i$, $\mathbb{P}\left( G_{<s}\ \middle|\ V \right) = 1$ for all $s \leq i$ and by the inductive hypothesis $\mathbb{P}\left( G_{<s}\ \middle|\ RV \right) = 1$ for all $s < i$. Thus, it just remains to show that $P(G_{i-1}\ |\ G_{<i-1}, RV) = 1$. Let $P'_{i}$ be the projection operator onto $\set{x'^{(1)},...,x'^{(i)},Rv_1,...,Rv_i}$ where the $x'$ are the oracle queries made by the algorithm when $f$ is determined by $RV$. For any $\ell \geq i-1$, consider
$\abs{\inner{\normalized{{P'}_{i-2}^\perp {x'}^{(i-1)}}}{Rv_\ell}}$.

Since $G_{<i-1}$ holds when $f$ is determined by $V$, by Lemma \ref{lem:oracleresponses} the queries $\set{x^{(1)},...,x^{(i-1)}}$ are determined by $\set{v_1,...,v_{i-2}}$. Since $G_{<i-1}$ holds when $f$ is determined by $RV$ and $\set{v_1,...,v_{i-2}} = \set{Rv_1,...,Rv_{i-2}}$, ${x'}^{(i-1)} = x^{(i-1)}$. Furthermore, since $R$ preserves $\set{x^{(1)},...,x^{(i-2)},v_1,...,v_{i-2}}$, it is also the case that ${P'}_{i-2}^\perp = P_{i-2}^\perp$.
Finally, since $P_{i-2}^\perp x^{(i-1)} = x^{(i-1)} - P_{i-2} x^{(i-1)} \in \spn{x^{(1)},...,x^{(i-1)},v_1,...,v_{i-2}}$, it is unchanged by $R^T$, therefore
\begin{equation}
\abs{\inner{\normalized{{P'}_{i-2}^\perp {x'}^{(i-1)}}}{Rv_\ell}} 
= \abs{\inner{R^T\normalized{P_{i-2}^\perp x^{(i-1)}}}{v_\ell}}
= \abs{\inner{\normalized{P_{i-2}^\perp x^{(i-1)}}}{v_\ell}}
< \frac{c}{2(\sqrt{2}+\sqrt{k-1})}
\end{equation}
since $G_{i-1}$ holds when $f$ is determined by $V$. Therefore, we conclude that $p_{V_{\geq t}}\left(V_{\geq t}\ \middle|\ G_{<t}, V_{<t}\right)$ is invariant under rotations that preserve $\set{x^{(1)},...,x^{(t-1)},v_1,...,v_{t-1}}$.

For a given $j \geq t$, the marginal density of $v_j$ conditioned on $G_{<t}, V_{<t}$ is invariant under $R$. By Lemma \ref{lem:oracleresponses}, since the optimization algorithm is deterministic, the queries $\set{x^{(1)},...,x^{(t)}}$ are completely determined given $G_{<t}, V_{<t}$, thus the projection $P_{t-1}^\perp$ is also determined by $G_{<t}, V_{<t}$. Therefore, the random vectors $\normalized{P_{t-1}^\perp v_j}$ and $\normalized{P_{t-1}^\perp Rv_j}$ have the same density. The rotation $R$ preserves $S_{t-1}$ and $R$ preserves length, so $\normalized{P_{t-1}^\perp Rv_j} = R\normalized{P_{t-1}^\perp v_j}$ and we conclude that the distribution of $\normalized{P_{t-1}^\perp v_j}$ conditioned on $G_{<t}, V_{<t}$ is spherically symmetric on $S_{t-1}^\perp$. 

We can now lower bound $\mathbb{P}\left[ G_t\ \middle|\ G_{<t} \right] = \mathbb{E}_{V_{<t}}\left[\mathbb{P}\left[ G_t\ \middle|\ G_{<t}, V_{<t} \right]\right] \geq \inf_{V_{<t}} \mathbb{P}\left[ G_t\ \middle|\ G_{<t}, V_{<t} \right]$.
For any $V_{<t}$,
\begin{equation}
\begin{aligned}
\mathbb{P}\left[ G_t\ \middle|\ G_{<t}, V_{<t} \right]
&= \mathbb{P}\left[\forall j \geq t\ \abs{\inner{\normalized{P_{t-1}^{\perp}x^{(t)}}}{v_j}} < \frac{c}{2(\sqrt{2}+\sqrt{k-1})} \ \middle|\ G_{<t}, V_{<t} \right] \\
&\geq 1- \sum_{j=t}^k \mathbb{P}\left[\abs{\inner{\normalized{P_{t-1}^{\perp}x^{(t)}}}{v_j}} \geq \frac{c}{2(\sqrt{2}+\sqrt{k-1})} \ \middle|\ G_{<t}, V_{<t} \right] \\
&\geq 1- \sum_{j=t}^k \mathbb{P}\left[\abs{\inner{\normalized{P_{t-1}^{\perp}x^{(t)}}}{\normalized{P_{t-1}^\perp v_j}}} \geq \frac{c}{2(\sqrt{2}+\sqrt{k-1})} \ \middle|\ G_{<t}, V_{<t} \right]
\end{aligned}
\end{equation}
The first term in the inner product is fixed given $G_{<t}, V_{<t}$, and we showed above that the second term is a unit vector that is distributed spherically symmetrically on the unit sphere in $S_{t-1}^\perp$ given $G_{<t}, V_{<t}$. Therefore, each probability is equal to $\mathbb{P}\left(\inner{u}{e_1} \geq \frac{c}{2(\sqrt{2}+\sqrt{k-1})} \right)$ where $u$ is uniformly random on the unit sphere in $\mathbb{R}^{d'}$ where $d' = \dim(S_{t-1}^\perp) \geq k - 2t + 2$. 

Imagining a unit sphere with ``up" and ``down" corresponding to $\pm e_1$, $\mathbb{P}\left(\inner{u}{e_1} \geq \frac{c}{2(\sqrt{2}+\sqrt{k-1})} \right)$ is the surface area of the ``end caps" of the sphere lying above and below circles of radius $R := \sqrt{1 - \left(\frac{c}{2(\sqrt{2}+\sqrt{k-1})}\right)^2}$, which is strictly smaller than the surface area of a full sphere of radius $R$. Therefore,
\begin{equation}
\begin{aligned} 
&\mathbb{P}\left[\abs{\inner{\normalized{P_{t-1}^{\perp}x^{(t)}}}{\normalized{P_{t-1}^\perp v_j}}} \geq \frac{c}{2(\sqrt{2}+\sqrt{k-1})} \ \middle|\ G_{<t}, V_{<t} \right] \\
&< \frac{\textrm{SurfaceArea}_{d-2t+2}(R)}{\textrm{SurfaceArea}_{d-2t+2}(1)} \\
&= R^{d-2t + 1} \\
&= \left(1 - \left(\frac{c}{2(\sqrt{2}+\sqrt{k-1})}\right)^2 \right)^{\frac{d-2t+1}{2}} \\
&\leq \exp\left( -\left(\frac{c}{2(\sqrt{2}+\sqrt{k-1})}\right)^2\frac{d-2t+1}{2} \right) 
\end{aligned}
\end{equation}
With the final inequality coming from the fact that $1-x \leq \exp(-x)$.  This holds for each $j \geq t$, therefore,

\begin{equation}
\begin{aligned}
\mathbb{P}\left[ G_t\ \middle|\ G_{<t} \right]
&\geq \inf_{V_{<t}} \mathbb{P}\left[ G_t\ \middle|\ G_{<t}, V_{<t} \right] \\
&\geq 1 - (k-t+1) \exp\left( -\left(\frac{c}{2(\sqrt{2}+\sqrt{k-1})}\right)^2\frac{d-2t+1}{2} \right) \\
&= 1 - (k-t+1)\exp\left( \frac{-c^2(d-2t+1)}{8(\sqrt{2}+\sqrt{k-1})^2}\right) 
\end{aligned}
\end{equation}
\end{proof}

Finally, bringing together Lemma \ref{lem:1term} and \eqref{eq:chainrule}
\begin{lemma} \label{lem:alltogether}
For any $\epsilon \in (0,\frac{1}{2})$, and dimension $d \geq \frac{2}{\epsilon^8}\log\frac{1}{\epsilon^4}$, $\mathbb{P}\left[E\right] > \frac{15}{16}$, where the probability is over the random choice of $\set{v_j}$.
\end{lemma}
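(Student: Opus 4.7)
My plan is to combine \eqref{eq:chainrule} with Lemma \ref{lem:1term}, convert the product $\prod_{t=1}^k \mathbb{P}[G_t \mid G_{<t}]$ into a union-bound sum via the Weierstrass inequality $\prod_t (1-p_t) \geq 1 - \sum_t p_t$, and then verify by direct substitution of the parameters $k = 1/(4\epsilon^2)$ and $c = \epsilon/k = 4\epsilon^3$ that the resulting error term is strictly less than $1/16$.

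First I would apply \eqref{eq:chainrule} together with Lemma \ref{lem:1term} to obtain
\[
\mathbb{P}[E] \;\geq\; 1 \;-\; \sum_{t=1}^k (k-t+1)\,\exp\!\left(\frac{-c^2(d-2t+1)}{8(\sqrt{2}+\sqrt{k-1})^2}\right).
\]
Next I would collapse the parameters: the bound $(\sqrt{2}+\sqrt{k-1})^2 \leq 2(k+1) \leq 4k$ (via $(a+b)^2 \leq 2(a^2+b^2)$) yields $\frac{c^2}{8(\sqrt{2}+\sqrt{k-1})^2} \geq \frac{c^2}{32k} = 2\epsilon^8$; the dimension factor satisfies $d - 2t + 1 \geq d - 2k = d - \frac{1}{2\epsilon^2}$ uniformly in $t \leq k$; and $\sum_{t=1}^k (k-t+1) = k(k+1)/2 \leq k^2 = \frac{1}{16\epsilon^4}$. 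Together these give a clean bound of the form $\mathbb{P}[E] \geq 1 - \frac{1}{16\epsilon^4}\exp\!\left(-2\epsilon^8(d - \tfrac{1}{2\epsilon^2})\right)$.

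Finally I would check that the hypothesized dimension $d \geq \frac{2}{\epsilon^8}\log\frac{1}{\epsilon^4}$ drives the exponential factor below $\epsilon^4$. Since the hypothesis gives $2\epsilon^8 d \geq 4\log(1/\epsilon^4)$, while the subtracted quantity $2\epsilon^8 \cdot \frac{1}{2\epsilon^2} = \epsilon^6$ is negligible compared to $\log(1/\epsilon^4)$ throughout $\epsilon \in (0,1/2)$, the exponent is at most $-\log(1/\epsilon^4)$, giving a final error at most $\frac{1}{16}$ and hence $\mathbb{P}[E] > \frac{15}{16}$.

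There is no real conceptual obstacle here: the rotational-symmetry argument in Lemma \ref{lem:1term} already does the hard work, and this lemma is a pure bookkeeping step that threads the parameters through. The only mild care needed is tracking constants so that the $1/\epsilon^8$ scaling in the hypothesized dimension lines up with the $c^2/k$ scaling inside Lemma \ref{lem:1term}'s exponent, and confirming that the lower-order correction from the $2k$ shift in the dimension factor is indeed swamped by the $\log(1/\epsilon^4)$ term coming from the hypothesis.
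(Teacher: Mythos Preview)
Your proposal is correct and follows essentially the same route as the paper: chain rule \eqref{eq:chainrule} combined with Lemma~\ref{lem:1term}, then a crude uniform bound on the exponent and a conversion of the product to a $k^2$-weighted exponential tail. The only cosmetic differences are that you use the Weierstrass inequality $\prod(1-p_t)\geq 1-\sum p_t$ directly (the paper instead uniformizes to $(1-p)^k$ and applies Bernoulli), and you bound $8(\sqrt{2}+\sqrt{k-1})^2\leq 32k$ where the paper uses $40k$; neither changes anything of substance.
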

\begin{proof}
By Lemma \ref{lem:1term}, for all $t$
\[ \mathbb{P}\left[ G_t\ \middle|\ G_{<t} \right] > 1 - (k-t+1)\exp\left( \frac{-c^2(d-2t+1)}{8(\sqrt{2}+\sqrt{k-1})^2}\right) \]
Combining this with Equation \eqref{eq:chainrule}:
\begin{equation}
\begin{aligned}
\mathbb{P}\left[ E \right] &\geq \prod_{t=1}^k \mathbb{P}\left[ G_t \ \middle|\ G_{<t} \right] \\
&> \prod_{t=1}^k \left( 1 - (k-t+1)\exp\left( \frac{-c^2(d-2t+1)}{8(\sqrt{2}+\sqrt{k-1})^2}\right) \right) \\
&> \left( 1 - k\exp\left( \frac{-c^2(d - 2k + 1)}{40k}\right) \right)^k \\
&\geq 1 - k^2\exp\left( \frac{-c^2(d - 2k + 1)}{40k}\right)
\end{aligned}
\end{equation}
Thus, when $\epsilon < \frac{1}{2}$ and
$d \geq \frac{2}{\epsilon^8}\log\frac{1}{\epsilon^4} \geq \frac{1}{\epsilon^8}\log\frac{1}{\epsilon^4} + 2k -1$
then
$\mathbb{P}\left[ E \right] > \frac{15}{16}$
\end{proof}

Thus $E$ occurs with constant probability when the dimension is sufficiently large, and when $E$ does occur, the algorithm must make at least $k$ queries the subgradient oracle of $f$ in order to find an $\epsilon$-suboptimal solution. Thus the expected number of oracle queries for any deterministic algorithm on the specified distribution over $\mathcal{F}$ is at least $\frac{15k}{16} = \Omega(1/\epsilon^2)$, applying Yao's minimax principle completes the proof. 

\bibliographystyle{plainnat}
{\footnotesize \bibliography{singlefunction}}

\end{document}